\theoremstyle{plain}                              
\newtheorem*{frobthm}{Frobenius Theorem}
\newtheorem{thm}{Theorem}
\newtheorem{defn}[thm]{Definition}
\newtheorem{lem}[thm]{Lemma}
\newtheorem{cor}[thm]{Corollary}
\theoremstyle{definition}                         
\newtheorem*{remark}{Remark} 
\theoremstyle{remark}                             
\newcommand{\R}{\mathbb{R}}                     
\newcommand{\veps}{\varepsilon}        
\newcommand{\del}{\partial}
\providecommand{\norm}[1]{\left\lVert#1\right\rVert}       
\providecommand{\abs}[1]{\left\lvert#1\right\rvert}        
\begin{document}

\title[A Frobenius integrability theorem]{A Frobenius integrability theorem for plane fields generated by
quasiconformal deformations}

\author[S. N. Simi\'c]{Slobodan N. Simi\'c}

\address{Department of Mathematics and
  Statistics \\San Jos\'e State University \\ San Jose, CA 95192-0103}

\email{slobodan.simic@sjsu.edu}

\thanks{This work was partially supported by an SJSU
Research, Scholarship, and Creative Activity grant.}

\maketitle


\begin{abstract}
  We generalize the classical Frobenius integrability theorem to plane
  fields of class $C^Q$, a regularity class introduced by Reimann
  \cite{reimann+76} for vector fields in Euclidean spaces. A $C^Q$
  vector field is uniquely integrable and its flow is a quasiconformal
  deformation. We show that an a.e. involutive $C^Q$ plane field
  (defined in a suitable way) in $\R^n$ is integrable, with integral
  manifolds of class $C^1$.
\end{abstract}







\section{Introduction}
\label{sec:introduction}

Frobenius's integrability theorem is a fundamental result in
differential topology and gives a necessary and sufficient condition
for a smooth plane field (i.e., a distribution) to be tangent to a
foliation. The result has been generalized to Lipschitz plane fields
in \cite{sns+96} and \cite{rampazzo+07}. The goal of this paper is to
further extend Frobenius's theorem to a regularity class weaker than
Lipschitz.

In \cite{reimann+76}, Reimann introduced this new regularity class for
vector fields and showed that it is situated between Lipschitz and
Zygmund. We will call the vector fields in this class Q-vector fields,
or of class $C^Q$ (see the definition below). Reimann proved that a
Q-vector field is uniquely integrable and that each time $t$ map of
its flow is quasiconformal. That is, the flow is a quasiconformal
deformation.

\begin{defn}
  A continuous vector field $ f: \R^n \rightarrow \R^{n} $ is called a \textsf{Q-vector field} or of \textsf{class} $C^Q$ if
  \begin{displaymath}
    \norm{f}_{Q} = \sup \left|
      \frac{\langle a, f(x+a)-f(x) \rangle}{|a|^{2}} -
      \frac{\langle b, f(x+b)-f(x) \rangle}{|b|^{2}}    \right| < \infty,
  \end{displaymath}
  where the supremum is taken over all $x \in \R^n$ and $|a|=|b| \neq 0$.
\end{defn}

Let 
$$
\norm{f}_Z = \sup_{x, y \in \R^n, y \neq 0} \frac{|f(x+y) + f(x-y) -2f(x)|} {|y|}
$$
and
$$
\norm{f}_L = \sup_{x \neq y} \frac{|f(x) - f(y)|}{|x-y|}
$$
denote the Zygmund and Lipschitz seminorms of $f: \R^{n} \rightarrow
\R^{n}$.

The following properties were shown in \cite{reimann+76}:

\begin{enumerate}
  
\item[(1)] $\norm{f}_{Z} \leq 4 \norm{f}_{Q} \leq 8 \norm{f}_{L}$.
  Thus every Lipschitz vector field is of class $C^Q$. In dimension one, we have $\norm{f}_{Z} = \norm{f}_{Q}.$

\item[(2)] If $f$ is a Q-vector field and $n \geq 2$, then $f$ is Frech\'{e}t 
differentiable a.e., its classical partial derivatives coincide with
its weak derivatives and are locally integrable.

\item[(3)] If $f : \R^n \to \R^n$ is a Q-vector field and $n \geq 2$, then the differential equation $\dot{x} = f(x)$ is uniquely integrable, and for every $t$, the time $t$ map of its flow is $e^{c \abs{t}}$-quasiconformal, for some $c > 0$.
\end{enumerate}

Recall also:

\begin{defn}   \label{def:qc}
  A homeomorphism $h:U \rightarrow V$ between open sets in $\R^{n}$ is
  said to be $K$-\textsf{quasiconformal} ($K$-qc) if the following
  conditions are satisfied:
  \begin{enumerate}
  \item[(a)] $h$ is absolutely continuous on almost every line segment
    in $U$ parallel to the coordinate axes;
  \item[(b)] $h$ is differentiable a.e.;
  \item[(c)] $\frac{1}{K} \norm{Dh(x)}^{n} \leq |\det Dh(x)| \leq
           K \: m(Dh(x))^{n}$, for a.e. $x \in U$,
    \end{enumerate}
         
where, for a linear map $A$, $m(A)$ denotes the ''minimum norm'' of $A$:
\begin{displaymath}
  m(A) = \min \{ |Av|: |v|=1 \}. 
\end{displaymath}
\end{defn}

The following characterization of Q-vector fields was proved in
\cite{reimann+76} (cf., Theorem 3):

\begin{thm}
  A continuous vector field $f : \R^n \to \R^n$ $(n \geq 2)$ is of
  class $C^Q$ if and only if:

  \begin{itemize}
  \item[(a)] $f$ has distributional derivatives which are locally
    integrable.
  \item[(b)] $\abs{f(x)} = O (\abs{x} \log \abs{x})$, as $\abs{x} \to
    \infty$.
    \item[(c)] The anticonformal part $Sf$ of the derivative of $f$ is
      essentially bounded, where
      \begin{displaymath}
        Sf = \frac{1}{2}[ Df + (Df)^T] - \frac{1}{n} \text{\normalfont \rmfamily Trace}(Df) I.
      \end{displaymath}
  \end{itemize}
\end{thm}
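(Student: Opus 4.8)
The statement is essentially Theorem~3 of \cite{reimann+76}; the route I would take is as follows. The organizing algebraic fact is the splitting $Df = (\lambda f)\,I + Sf + Wf$, where $\lambda f = \tfrac{1}{n}\operatorname{tr}(Df)$ is the conformal part, $Sf$ is the symmetric trace-free anticonformal part appearing in the statement, and $Wf$ is the antisymmetric part; because $\langle a, Wf(x)\,a\rangle = 0$ identically, only $\lambda f$ and $Sf$ enter the scalar $\langle a, Df(x)\,a\rangle$ that governs the $C^Q$ seminorm.

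\emph{Necessity of (a)--(c).} Suppose $\norm{f}_Q < \infty$. Condition (b) is soft: by property~(1), $\norm{f}_Z \le 4\norm{f}_Q$, and dyadic iteration of $|f(2y) - 2f(y) + f(0)| \le \norm{f}_Z\,|y|$ along the ray through $x$ yields $|f(x)| = O(|x|\log|x|)$. Condition (a) is the regularity content of property~(2) (see the last paragraph for its proof). Granting (a), condition (c) is short: at a point $x$ of differentiability one has $\langle u, Df(x)u\rangle = \lim_{r\to 0^+}\langle ru, f(x+ru) - f(x)\rangle\,|ru|^{-2}$ for every unit vector $u$, so letting $r\to 0$ in the defining inequality for $\norm{f}_Q$ gives $|\langle u, Df(x)u\rangle - \langle v, Df(y)v\rangle| \le \norm{f}_Q$ for a.e.\ $x,y$ and all unit $u,v$; fixing $y=x$, varying $u$, and using $\langle u, Df(x)u\rangle = \langle u, Sf(x)u\rangle + \lambda f(x)$, the oscillation over the unit sphere of $u\mapsto\langle u, Sf(x)u\rangle$ is at most $\norm{f}_Q$, and since $Sf(x)$ is symmetric with zero trace this oscillation equals the spread of the eigenvalues of $Sf(x)$, which dominates $\norm{Sf(x)}$; hence $\norm{Sf}_\infty \le \norm{f}_Q$.

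\emph{Sufficiency.} Assume (a)--(c) and mollify, $f_\veps = f*\rho_\veps$: then $f_\veps$ is smooth, $f_\veps \to f$ locally uniformly (so $\norm{f}_Q \le \liminf_{\veps\to 0}\norm{f_\veps}_Q$), while $Sf_\veps = (Sf)*\rho_\veps$ and $\operatorname{div} f_\veps = (\operatorname{div} f)*\rho_\veps$, so the bound on $Sf$ and hypothesis (b) persist. For a smooth vector field $g$, the fundamental theorem of calculus along $[x,x+a]$ together with $\langle a, Wg\,a\rangle = 0$ gives
\[
\frac{\langle a, g(x+a) - g(x)\rangle}{|a|^2} = \int_0^1 (\lambda g)(x+ta)\,dt + \int_0^1 \frac{\langle a, Sg(x+ta)a\rangle}{|a|^2}\,dt,
\]
and the last integral is bounded in modulus by $\norm{Sg}_\infty$. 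Applying this with $g = f_\veps$, we see that, up to the additive constant $2\norm{Sf}_\infty$, bounding $\norm{f_\veps}_Q$ is the same as bounding the oscillation of the segment-averages $x\mapsto \int_0^1 (\operatorname{div} f_\veps)(x+ta)\,dt$ over all base points $x$ and all $a$ of a common (but arbitrary) length, uniformly in that length and in $\veps$.

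\emph{Where the difficulty lies.} Two points carry it. First, the reduction just made is reversible, so — given $Sf\in L^\infty$ — finiteness of $\norm{f}_Q$ amounts precisely to such a uniform bound on the line-segment averages of $\operatorname{div} f$; producing it from (a) and (b) is the technical heart, and it is here that the growth hypothesis is indispensable, keeping the conformal part $\lambda f$ from accumulating along long segments. Second, proving property~(2) — i.e.\ establishing (a) — from $\norm{f}_Q < \infty$ is exactly where $n\ge 2$ is used: in dimension one $C^Q$ coincides with the Zygmund class, which contains lacunary fields such as $\sum_k 2^{-k}\sin(2^k x)$ whose distributional derivative is not locally integrable, so (a) is false there. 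For $n\ge 2$ one exploits the freedom to average the difference quotient $\langle a, f(x+a) - f(x)\rangle\,|a|^{-2}$ over the directions of $a$ on a sphere, which by the divergence theorem recovers control of the ball-averages of $\operatorname{div} f$ at every scale; this, with polarization in $a$ for the off-diagonal symmetric terms, is what makes the $L^1_{loc}$ regularity go through.
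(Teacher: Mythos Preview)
The paper does not prove this statement: it is quoted verbatim as Theorem~3 of Reimann~\cite{reimann+76} and used as a black box, so there is no in-paper argument to compare against.

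That said, your sketch tracks Reimann's original argument in broad outline and you have correctly isolated where the work is. A few remarks. Your treatment of necessity is essentially complete: the dyadic telescoping for (b), the pointwise limit argument for (c), and the deferral of (a) to the $n\ge 2$ spherical-averaging trick are all standard and correct. On sufficiency, your reduction via the line-integral identity is right, but your description of the remaining task is slightly off: what must be bounded is not the oscillation of $x\mapsto\int_0^1(\lambda f_\veps)(x+ta)\,dt$ over all $x$, but the \emph{difference} $\int_0^1[(\lambda f_\veps)(x+ta)-(\lambda f_\veps)(x+tb)]\,dt$ for fixed $x$ and $|a|=|b|$, uniformly in $x$, the common length, and $\veps$. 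Since $\operatorname{div} f$ is only known to be in $L^1_{\mathrm{loc}}$, not $L^\infty$, this is genuinely delicate; Reimann handles it by exploiting the mean-value property after passing to a potential-theoretic representation, and the growth bound (b) enters to control the contribution at infinity in that representation rather than ``along long segments'' in the naive sense. You have named the gap honestly, but as written the proposal is a roadmap rather than a proof.
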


Let $X$ and $Y$ be vector fields of class $C^Q$. Since both are
a.e. differentiable, we can define their Lie bracket in the usual way:
\begin{displaymath}
  [X,Y] = DX(Y) - DY(X).
\end{displaymath}
Alternatively,
\begin{displaymath}
  [X,Y] u = X(Y u) - Y(X u),
\end{displaymath}
for every $C^\infty$ function $u : \R^n \to \R$.

For a discussion of properties of the Lie bracket in the setting of
rough vector fields (i.e., Lipschitz and below), see
\cite{colombo+tione+2021}.

\subsection*{Plane fields of class $C^Q$}
\label{sec:q-plane-fields}

We come to the question of how to define a plane field of class
$C^Q$. One possibility is to use the usual route and say that $E$ is
class $C^Q$ if it is locally spanned by $C^Q$ vector fields. This
leads to some technical difficulties so instead we opt for a slightly
stronger definition.

Let $E$ be a continuous $k$-dimensional plane field on $\R^n$ and let
$p \in \R^n$ be arbitrary. Let $H_p$ be an $(n-k)$-dimensional
coordinate plane in $\R^n$ such that $E_p$ is transverse to
$\{p \} \times H_p \subset T_p \R^n$. Continuity of $E$ implies the
existence of a neighborhood $U$ of $p$ such that for every $q \in U$,
$E_q$ remains transverse to $\{ q \} \times H_p \subset T_q \R^n$. Let
$K_p$ be the $k$-dimensional coordinate subspace of $\R^n$
complementary to $H_p$ and denote by $\pi_p$ the orthogonal projection
$\pi_p : \R^n \to K_p$. Observe that for every $q \in U$, the
restriction of $D_q \pi_p$ to $E_q$ is injective.

For an arbitrary vector field $X$ on $K_p$ with compact support in $\pi_p(U)$, define a lift $\hat{X}$ of $X$ to $E$ by
\begin{displaymath}
  \hat{X}_q = (\left. D_q \pi_p \right|_{E_q})^{-1}(X_{\pi(q)}),
\end{displaymath}
for $q \in U$, and $\hat{X}_q = 0$, otherwise. This vector field is a section of $E$.

\begin{defn}
  A continuous $k$-dimensional plane field $E$ on $\R^n$ is a
  $Q$-\textsf{plane field} or of \textsf{class} $C^Q$, if for every
  $p, U, H_p$, and $X$ as above, the following holds: if $X$ is
  $C^\infty$ and has compact support, then its lift $\hat{X}$ to $E$
  (defined as above) is a Q-vector field.
\end{defn}

Note that if $E$ is of class $C^r$ ($r \geq 1$) or Lipschitz, then
lifts to $E$ of $C^\infty$ vector fields are $C^r$ or Lipschitz, respectively.

\begin{defn}
  A plane field of class $C^Q$ is said to be \textsf{involutive} if
  for every two $C^Q$ sections $X, Y$ of $E$, their Lie bracket
  $[X,Y]$ is an a.e.  section of $E$; i.e.,
  \begin{displaymath}
    [X,Y]_p \in E_p,
  \end{displaymath}
  for a.e. $p$.
\end{defn}

Our main result is:

\begin{frobthm}
  Let $E$ be a $k$-dimensional plane field of class $C^Q$ on
  $\R^n$. If $E$ is involutive, then $E$ is integrable, in the
  following sense. For every $p \in \R^n$, there exists a cubic
  neighborhood $U = (-\veps,\veps)^n$ of $\mathbf{0}$ in $\R^n$, a
  neighborhood $V$ of $p$, and an almost everywhere differentiable
  homeomorphism
  \begin{displaymath}
    \Phi : U \to V
  \end{displaymath}
  such that $\Phi$ maps slices
  $(-\veps,\veps)^k \times \{ \text{\normalfont \rmfamily const} \}$ to integral
  manifolds of $E$ in $V$. Writing
  $\Psi = \Phi^{-1} : p \mapsto x = (x_1,\ldots, x_n)$, we have that
  the integral manifolds of $E$ in $V$ are the slices
  \begin{displaymath}
    x_{k+1} = \text{\rm constant}, \ldots, x_n = \text{\rm constant}.
  \end{displaymath}
  Every integral manifold of $E$ in $V$ lies in one of these slices
  and is of class $C^1$.
\end{frobthm}

\section{Preliminaries from the DiPerna-Lions-Ambrosio theory}

We briefly recall some basic facts from the DiPerna-Lions-Ambrosio
theory (cf., \cite{ambrosio+04, diPerna+Lions+89}) of regular
Lagrangian flows, which generalize the notion of a flow for ``rough''
vector fields. These will be needed in the proof of the main result.

The basic idea of DiPerna-Lions-Ambrosio is to exploit (via the theory
of characteristics) the connection between the ODE $\dot{x} = X(x)$,
where $X$ is a vector field, and the associated transport PDE:
\begin{displaymath}
  \frac{\del u}{\del t} + X \cdot \nabla_x u = 0,
\end{displaymath}
for a function $u = u(t,x)$.

A common definition of this generalized notion of a flow is
the following.

\begin{defn}[Regular Lagrangian flows \cite{colombo+tione+2021}]
  We say that $\phi : \R \times \R^n \to \R^n$ is a \textsf{regular
    Lagrangian flow} for a vector field $X$ on $\R^n$ if:

  \begin{enumerate}
    
  \item[(a)] For a.e. (with respect to the 1-dimensional Lebesgue
    measure) $t \in \R$ and every measure zero Borel set
    $A \subset \R^n$, the set $\phi_t^{-1}(A)$ has $n$-dimensional
    Lebesgue measure zero, where $\phi_t(x) = \phi(t,x)$.

  \item[(b)] We have $\phi_0 = \text{\rm identity}$ and for a.e.
    $x \in \R^n$, $t \mapsto \phi_t(x)$ is an absolutely continuous
    integral curve of $X$, i.e.,
    \begin{displaymath}
      \frac{d}{dt} \phi_t(x) = X(\phi_t(x)).
    \end{displaymath}
  \end{enumerate}
\end{defn}

DiPerna, Ambrosio, and Lions showed that if $X \in L^\infty \cap BV$
has essentially bounded divergence, then the regular Lagrangian flow
of $X$ exists and is unique. Regular Lagrangian flows are \emph{stable} in
the following sense: if $(X_k)$ is a sequence of smooth vector fields
such that $X_k \to X$ strongly in $L^1_{\text{loc}}$, and
$(\text{div}(X_k))$ is equibounded in $L^\infty$, then the flows
$\phi_t^k$ of $X_k$ converge strongly to $\phi_t$ in
$L^1_{\text{loc}}$, for every $t \in \R$.

If $X$ is of class $C^Q$, each time $t$-map $\phi_t$ of its flow (in
the usual sense) is $K$-quasiconformal for some $K$, hence preserves
sets of Lebesgue measure zero (see \cite{koskela+09}). Thus
$\{ \phi_t \}$ is the regular Lagrangian flow of $X$.

\begin{lem} \label{lem:lip} Let $X$ is a $C^Q$ vector field with
  divergence in $L^\infty$ and denote by $\{ \phi_t \}$ its flow. Then
  $\norm{D\phi_t}_{L^\infty} < \infty$, for each $t$, i.e., $\phi_t$
  is Lipschitz.
\end{lem}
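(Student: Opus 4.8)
The claim is that the flow of a $C^Q$ vector field $X$ with $\mathrm{div}\,X \in L^\infty$ is Lipschitz in the space variable. The natural strategy is a Gr\"onwall-type estimate on the spatial derivative $D\phi_t$, but since $X$ is only $C^Q$ (not $C^1$), we cannot differentiate the ODE directly. Instead I would work with difference quotients. Fix $t$ and two points $x, y$, and set $\delta(t) = |\phi_t(x) - \phi_t(y)|$. The usual Gr\"onwall argument would need a Lipschitz bound on $X$ itself, which we do not have; what we \emph{do} have is the $C^Q$ condition, which controls the \emph{radial} component $\langle a, X(z+a) - X(z)\rangle/|a|^2$ up to a constant. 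So the plan is to estimate $\frac{d}{dt}\log\delta(t)$: we have
\begin{displaymath}
  \frac{1}{2}\frac{d}{dt}\delta(t)^2 = \langle \phi_t(x) - \phi_t(y),\, X(\phi_t(x)) - X(\phi_t(y)) \rangle,
\end{displaymath}
and the $C^Q$ bound applied with $z = \phi_t(y)$ and $a = \phi_t(x) - \phi_t(y)$ gives
\begin{displaymath}
  \left| \frac{\langle \phi_t(x) - \phi_t(y),\, X(\phi_t(x)) - X(\phi_t(y))\rangle}{\delta(t)^2} - c(t) \right| \le \|X\|_Q,
\end{displaymath}
where $c(t) = \langle b, X(z+b) - X(z)\rangle/|b|^2$ for any reference vector $b$ of length $\delta(t)$; this is not yet a clean bound because $c(t)$ is not controlled pointwise. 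The way around this is that $c(t)$, suitably averaged over directions $b$, is essentially the divergence: averaging $\langle b, X(z+b)-X(z)\rangle/|b|^2$ over the sphere $|b| = r$ and letting $r \to 0$ recovers $\frac{1}{n}\mathrm{div}\,X(z)$ at points of differentiability, so the $C^Q$ bound combined with $\mathrm{div}\,X \in L^\infty$ should yield $|\langle \phi_t(x)-\phi_t(y), X(\phi_t(x)) - X(\phi_t(y))\rangle| \le C\,\delta(t)^2$ with $C = C(\|X\|_Q, \|\mathrm{div}\,X\|_\infty, n)$, and Gr\"onwall then gives $\delta(t) \le e^{C|t|}\delta(0)$, i.e. $\mathrm{Lip}(\phi_t) \le e^{C|t|}$.

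To make the averaging step rigorous I would first reduce to smooth $X$ by mollification: if $X^\varepsilon = X * \rho_\varepsilon$, then $\|X^\varepsilon\|_Q \le \|X\|_Q$ and $\|\mathrm{div}\,X^\varepsilon\|_\infty \le \|\mathrm{div}\,X\|_\infty$ (convolution does not increase these), and $X^\varepsilon \to X$ in $L^1_{\mathrm{loc}}$ with equibounded divergences, so by the stability of regular Lagrangian flows the flows $\phi_t^\varepsilon$ converge to $\phi_t$ in $L^1_{\mathrm{loc}}$. For smooth $X^\varepsilon$ the derivative $D\phi_t^\varepsilon$ exists and satisfies the linear variational equation $\frac{d}{dt} D\phi_t^\varepsilon = DX^\varepsilon(\phi_t^\varepsilon)\, D\phi_t^\varepsilon$; taking $v$ a unit vector and $w(t) = D\phi_t^\varepsilon v$, one has $\frac{1}{2}\frac{d}{dt}|w|^2 = \langle w, DX^\varepsilon(\phi_t^\varepsilon) w\rangle = \langle w, S_{\mathrm{sym}} w\rangle$ where $S_{\mathrm{sym}}$ is the symmetric part of $DX^\varepsilon$. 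Now the $C^Q$ condition at the infinitesimal level says exactly that the symmetric part of $DX^\varepsilon$ has all its eigenvalues within $\|X\|_Q$ of their common mean $\frac{1}{n}\mathrm{div}\,X^\varepsilon$ — this is the infinitesimal form of the definition, which I would extract by dividing by $|a|^2$ and letting $|a| \to 0$ — so $\langle w, S_{\mathrm{sym}} w\rangle \le (\frac{1}{n}\|\mathrm{div}\,X^\varepsilon\|_\infty + \|X\|_Q)|w|^2$. Gr\"onwall gives $|D\phi_t^\varepsilon| \le e^{C|t|}$ with $C$ independent of $\varepsilon$, hence $\|D\phi_t^\varepsilon\|_{L^\infty} \le e^{C|t|}$ uniformly. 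Passing to the limit: the $\phi_t^\varepsilon$ are uniformly Lipschitz, so by Arzel\`a–Ascoli (on compact sets) a subsequence converges locally uniformly to a Lipschitz limit with the same constant; since $\phi_t^\varepsilon \to \phi_t$ in $L^1_{\mathrm{loc}}$, the limit is $\phi_t$, and therefore $\phi_t$ is Lipschitz with $\mathrm{Lip}(\phi_t) \le e^{C|t|}$ and $\|D\phi_t\|_{L^\infty} < \infty$.

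I expect the main obstacle to be the extraction and correct statement of the "infinitesimal $C^Q$ bound": showing that finiteness of $\|X\|_Q$ forces the symmetric part of $DX$ (where it exists) to have bounded spread of eigenvalues about $\frac{1}{n}\mathrm{div}\,X$. One has to be careful that the $C^Q$ seminorm controls $\langle a, X(z+a)-X(z)\rangle/|a|^2 - \langle b, X(z+b)-X(z)\rangle/|b|^2$ with $|a| = |b|$, and to convert this into a statement about $\langle v, S_{\mathrm{sym}}(z)v\rangle - \langle v', S_{\mathrm{sym}}(z)v'\rangle$ for unit vectors $v, v'$, requires a Taylor expansion valid at a.e. $z$ (using the a.e. Fr\'echet differentiability from property (2)), together with checking the limit is uniform enough. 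For the mollified $X^\varepsilon$ this is clean since everything is smooth, so doing the whole argument at the $\varepsilon$-level and only then passing to the limit sidesteps the delicate a.e. differentiability issues — that is the route I would take. A secondary technical point is justifying that $\|X^\varepsilon\|_Q \le \|X\|_Q$; this follows because the $C^Q$ difference expression is linear in $X$ and one can move the convolution outside, but it should be stated explicitly.
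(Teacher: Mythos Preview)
Your argument is correct, but it takes a genuinely different route from the paper. The paper does \emph{not} run Gr\"onwall on $|D\phi_t v|^2$ at all; instead it first bounds the Jacobian determinant via Liouville's formula $\frac{d}{dt}\det D\phi_t = (\mathrm{div}\,X)\circ\phi_t \cdot \det D\phi_t$ and Gr\"onwall (with mollification and stability of regular Lagrangian flows to pass to the $C^Q$ case), obtaining $\|\det D\phi_t\|_{L^\infty} \le \exp(|t|\,\|\mathrm{div}\,X\|_{L^\infty})$. It then invokes Reimann's result that $\phi_t$ is $K$-quasiconformal and uses part~(c) of the definition of quasiconformality to get $\|D\phi_t\|^n \le K\,|\det D\phi_t|$, whence $\|D\phi_t\|_{L^\infty} \le K^{1/n}\|\det D\phi_t\|_{L^\infty}^{1/n} < \infty$.

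Your approach controls the full symmetric part $S_{\mathrm{sym}} = \tfrac{1}{2}(DX^\varepsilon + (DX^\varepsilon)^T)$ directly, by observing that the $C^Q$ seminorm (passed to the infinitesimal level for smooth $X^\varepsilon$) bounds the spread of the eigenvalues of $S_{\mathrm{sym}}$, while the divergence bounds their mean; this is, as you implicitly note, equivalent to the essential boundedness of the anticonformal part $SX^\varepsilon = S_{\mathrm{sym}} - \tfrac{1}{n}(\mathrm{div}\,X^\varepsilon)I$ from Reimann's characterization. Your route is more self-contained---it does not use quasiconformality of $\phi_t$ as a black box and yields an explicit Lipschitz constant $e^{C|t|}$ with $C = \tfrac{1}{n}\|\mathrm{div}\,X\|_{L^\infty} + \|X\|_Q$---whereas the paper's route is shorter because Reimann's quasiconformality theorem has already been recorded as background and can simply be quoted. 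Both proofs share the same mollification-and-stability scaffolding; only the core estimate differs.
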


\begin{proof}
  We follow \cite{colombo+tione+2021}. Assume for a moment that $X$ is
  smooth. Then by Liouville's Theorem, 
  \begin{displaymath}
    \frac{d}{dt} \det D\phi_t(x) = \text{div}(X)(\phi_t(x)) \cdot \det D\phi_t(x),
  \end{displaymath}
  which implies, via Gronwall's inequality that
  \begin{equation}  \label{eq:liouville}
    \exp(-T \norm{\text{div}(X)}_{L^\infty}) \leq \det D\phi_t(x) \leq
    \exp(T \norm{\text{div}(X)}_{L^\infty}),
  \end{equation}
  for all $T > 0$ and $-T \leq t \leq T$. Thus $\norm{\det
    D\phi_t}_{L^\infty} < \infty$, for each $t$. 

  To make this work for a $C^Q$ vector field $X$, consider
  $\det D\phi_t$ as a density, and take the mollifications $X^\veps$
  of $X$ (see, e.g., \cite{evans+98}); let $\phi_t^\veps$ be the flow
  of $X^\veps$. By the stability of regular Lagrangian flows, we have
  \begin{displaymath}
    \det D\phi_t^\veps \stackrel{\ast}{\rightharpoonup} \det D\phi_t
  \end{displaymath}
  as $\veps \to 0$ in $L^\infty$, which again yields
  \eqref{eq:liouville}.
  Since $\phi_t$ is also $K$-quasiconformal, it follows (see part (c) in Def. \ref{def:qc}) that
\begin{displaymath}
  \norm{D \phi_t}_{L^\infty} \leq K^{1/n} \norm{\det
    D\phi_t}_{L^\infty}^{1/n} < \infty,
\end{displaymath}
for $-T \leq t \leq T$, as desired. In particular, for any essentially
bounded vector field $Y$, $\norm{D\phi_t(Y)}_{L^\infty} < \infty$, for
$-T \leq t \leq T$.
\end{proof}

The following corollary is a consequence of Theorem 1.1 in
\cite{colombo+tione+2021} and Lemma~\ref{lem:lip}.

\begin{cor}   \label{cor:commute}
  Let $X, Y$ be bounded Q-vector fields with essentially bounded divergence, and let $\Phi = \{ \phi_t \}$, $\Psi = \{ \psi_t \}$ be their flows. Then the following statements are equivalent:

  \begin{enumerate}
  \item[(a)] $\Phi$ and $\Psi$ commute as flows, i.e.,
    \begin{displaymath}
      \phi_t \circ \psi_s(x) = \psi_s \circ \phi_t (x),
    \end{displaymath}
    for a.e. $x \in \R^n$ and all $s, t \in \R$.

  \item[(b)] $[X,Y] = \mathbf{0}$, almost everywhere.
  \end{enumerate}
\end{cor}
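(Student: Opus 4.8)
The plan is to reduce the statement to Theorem~1.1 of \cite{colombo+tione+2021}, which is an equivalence of exactly this type---commutativity of regular Lagrangian flows against vanishing of the (distributional) Lie bracket---and then to verify that its hypotheses are met here. First I would record that the data are of the right kind. By property~(2), $X$ and $Y$ are differentiable a.e.\ with classical derivatives equal to their weak derivatives, which lie in $L^1_{\mathrm{loc}}$; by hypothesis $X, Y \in L^\infty$ with $\mathrm{div}\,X, \mathrm{div}\,Y \in L^\infty$; boundedness of $X$ and $Y$ forces the flows to be complete; and by property~(3), together with the fact (noted above) that quasiconformal maps preserve null sets, the classical flows $\Phi$ and $\Psi$ are the unique regular Lagrangian flows of $X$ and $Y$. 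Thus every object appearing in the cited theorem is well defined in our setting, and the DiPerna--Lions--Ambrosio machinery applies.

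Next I would feed in the quantitative input supplied by Lemma~\ref{lem:lip}: each $\phi_t$ and each $\psi_s$ is Lipschitz and---applying the lemma also to $-t$ and $-s$---these are bi-Lipschitz homeomorphisms, with $\norm{D\phi_t}_{L^\infty}$ and $\norm{D\psi_s}_{L^\infty}$ controlled uniformly for $s, t$ in any compact interval. This is precisely the regularity that validates the flow-commutator computation underlying \cite{colombo+tione+2021}: one differentiates $t \mapsto \phi_{-t}\circ\psi_s\circ\phi_t$ and estimates the resulting terms using that $D\phi_t$ acts boundedly on the essentially bounded field $Y$ (the closing remark in the proof of Lemma~\ref{lem:lip}), and the $\ast$-weak convergence of mollified flows used there lets one pass to the limit.

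Finally I would reconcile the two a priori different notions of Lie bracket: the statement uses the pointwise-a.e.\ bracket $[X,Y] = DX(Y) - DY(X)$, legitimate since $X, Y$ are a.e.\ differentiable, whereas \cite{colombo+tione+2021} works with the distributional commutator. Since $X, Y \in L^\infty$ and $DX, DY \in L^1_{\mathrm{loc}}$ coincide with the classical derivatives, the Leibniz rule holds distributionally and the two brackets agree in $L^1_{\mathrm{loc}}$, hence a.e.; with this identification Theorem~1.1 of \cite{colombo+tione+2021} yields (a)~$\Leftrightarrow$~(b). I expect the main obstacle to be bookkeeping rather than analysis: checking that the hypotheses of the cited theorem are literally those available here---most delicately, that the uniform Lipschitz bound on the flow maps from Lemma~\ref{lem:lip} is the hypothesis under which commutativity is equivalent to $[X,Y]=0$ (rather than, say, a two-sided Jacobian bound together with Sobolev regularity of the flow map itself), and that the class $C^Q$ sits inside whatever regularity class ($BV$ or $W^{1,1}_{\mathrm{loc}}$) the theorem is phrased for, which is again immediate from property~(2). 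If any approximation step is required beyond what the stability of regular Lagrangian flows already provides, it would be a standard mollification argument with equibounded divergences, exactly as in Lemma~\ref{lem:lip}.
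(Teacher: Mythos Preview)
Your proposal is correct and follows exactly the route the paper takes: the paper states the corollary as an immediate consequence of Theorem~1.1 in \cite{colombo+tione+2021} together with Lemma~\ref{lem:lip}, and your write-up simply unpacks the verification of hypotheses (Sobolev regularity from property~(2), regular Lagrangian flows from property~(3), Lipschitz flow maps from Lemma~\ref{lem:lip}, and identification of the pointwise and distributional brackets). There is no discrepancy in approach---your version is just a more detailed rendering of the same argument.
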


\section{Proof of the main result}
\label{sec:proof}

The proof is a generalization of the standard proof for the smooth
case, which can be found in, say, Lee~\cite{lee+smooth+13}. We will
show that locally $E$ admits a frame consisting of commuting $C^Q$
vector fields, the composition of whose flows then defines the desired
coordinate system.

Assume that $E$ is an involutive $k$-dimensional plane field of class
$C^Q$ and let $p = (p_1,\ldots, p_n) \in \R^n$ be an arbitrary
point. Without loss we can assume that $E_p$ is transverse to the
subspace of $T_p \R^n$ spanned by
\begin{displaymath}
  \left. \frac{\del}{\del x_{k+1}} \right|_p, \ldots, \left. \frac{\del}{\del x_n} \right|_p.
\end{displaymath}
Let $\pi : \R^n \to \R^k \times \{ \mathbf{0} \}$ be the projection
$(x_1,\ldots,x_n) \mapsto (x_1,\ldots, x_k, \overbrace{0, \ldots,
  0}^{n-k})$. Then there exists a neighborhood $W$ of $p$ such that
for every $q \in W$, $D_q \pi$ is injective when restricted to
$E_q$. Let $U$ be a neighborhood of $p$ such that
$\overline{U} \subset W$.

Let $\beta : \R^k \to \R$ be a $C^\infty$ bump function such that
$0 \leq \beta \leq 1$, $\beta = 1$ on $\pi(U)$, and $\beta = 0$ on the
complement of $\pi(W)$. For $1 \leq i \leq k$, set
\begin{displaymath}
  V_i = \beta \frac{\del}{\del x_i}.
\end{displaymath}
Then $V_i$ is a $C^\infty$ vector field on $\R^n$ with compact support
in $\pi(U)$. Denote its lift to $E$ via $\pi$ by $X_i$; i.e.,
\begin{displaymath}
  X_i(q) = (D_q \pi |_{E_q})^{-1}(V_i(\pi(q)).
\end{displaymath}
Since $E$ is of class $C^Q$, $X_i$ is also $C^Q$. Moreover, on $U$, we
have
\begin{displaymath}
  \pi_\ast([X_i,X_j]) = [\pi_\ast(X_i), \pi_\ast(X_j)] = [V_i,V_j] = 
  \left[ \frac{\del}{\del x_i}, \frac{\del}{\del x_j} \right] = 0.
\end{displaymath}
By involutivity of $E$, $[X_i,X_j]$ is a section of $E$ a.e.. Since
the restriction of $D\pi$ to $E$ is injective, it follows that
$[X_i,X_j] = 0$, a.e. on $U$.

\begin{lem}
  $X_i$ has essentially bounded divergence, for $1 \leq i \leq k$.
\end{lem}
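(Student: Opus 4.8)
The plan is to read off $\text{div}(X_i)$ from a single entry of the anticonformal part $SX_i$. The point to keep in mind is that being of class $C^Q$ controls only the traceless part $SX_i$ of $DX_i$ and not its trace (the divergence), so one needs a structural input: that some diagonal entry of $DX_i$ is bounded \emph{a priori}. That input comes from the fact that the first $k$ components of $X_i$ are completely explicit.

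First I would record the identity
\begin{displaymath}
  (X_i)_j = \delta_{ij}\,(\beta \circ \pi), \qquad 1 \le j \le k.
\end{displaymath}
Indeed, by construction $\pi_\ast(X_i) = V_i = \beta\,\frac{\del}{\del x_i}$, and since $\pi$ is the orthogonal projection onto the first $k$ coordinates, $\pi_\ast = D\pi$ simply records the first $k$ components of a vector; hence those of $X_i$ coincide with those of $V_i$. Consequently the first $k$ components of $X_i$ are $C^\infty$ functions of $(x_1,\dots,x_k)$ alone with globally bounded derivatives, and in particular the $(i,i)$-entry of the (a.e.\ defined) Jacobian is
\begin{displaymath}
  (DX_i)_{ii} = \Bigl(\tfrac{\del \beta}{\del x_i}\Bigr) \circ \pi \in L^\infty(\R^n).
\end{displaymath}

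Next I would invoke Reimann's characterization theorem. Since $E$ is of class $C^Q$ and $V_i$ is $C^\infty$ with compact support, $X_i$ is a $C^Q$ vector field, so its anticonformal part
\begin{displaymath}
  S X_i = \tfrac{1}{2}\bigl(DX_i + (DX_i)^T\bigr) - \tfrac{1}{n}\,\text{div}(X_i)\, I
\end{displaymath}
is essentially bounded. Reading off the $(i,i)$-entry of this matrix identity yields
\begin{displaymath}
  \text{div}(X_i) = n\bigl((DX_i)_{ii} - (SX_i)_{ii}\bigr),
\end{displaymath}
whose right-hand side lies in $L^\infty(\R^n)$ as the difference of the bounded function $(DX_i)_{ii}$ and an entry of the essentially bounded matrix $SX_i$. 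Hence $\text{div}(X_i) \in L^\infty$, as claimed. (Alternatively one may compare the entire upper-left $k \times k$ block of $SX_i$ with the corresponding, explicitly bounded, block of $\frac{1}{2}(DX_i + (DX_i)^T)$.)

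I do not anticipate a genuine obstacle here: once the characterization theorem is available the computation is purely algebraic. The only step that really must be justified is the first displayed identity, which is immediate from the definition of the lift — and it is also exactly where the paper's stronger notion of a $C^Q$ plane field is used, since that is what guarantees $X_i \in C^Q$ and hence $SX_i \in L^\infty$ in the first place.
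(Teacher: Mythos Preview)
Your proposal is correct and follows essentially the same route as the paper: both read $\text{div}(X_i)$ off a diagonal entry of the essentially bounded matrix $SX_i$, using that the corresponding diagonal entry of $DX_i$ is known a priori (you work globally with $(DX_i)_{ii} = (\partial_i\beta)\circ\pi$, while the paper restricts to $U$, where $\beta\equiv 1$ and hence $(DX_i)_{jj}=0$ for every $j\le k$, obtaining $S(X_i)_{jj} = -\tfrac{1}{n}\,\text{div}(X_i)$ directly). The only difference is this cosmetic localization; the key idea is identical.
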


\begin{proof}
  Fix $1 \leq i \leq k$. It follows by construction of $X_i$ that on
  $U$ we have:
  \begin{displaymath}
    X_i = \frac{\del}{\del x_i} + \sum_{j=k+1}^n a_j \frac{\del}{\del x_j},
  \end{displaymath}
  for some continuous a.e. differentiable functions $a_j$ on
  $U$. Since $X_i$ is $C^Q$, $S(X_i)$ is essentially bounded. It is easy
  to check that the $(j,j)$-component of $S(X_i)$ equals
  \begin{displaymath}
    S(X_i)_{jj} = -\frac{1}{n} \sum_{\ell = k+1}^n \frac{\del a_\ell}{\del
      x_\ell} = -\frac{1}{n} \text{div}(X_i).
  \end{displaymath}
  Thus the divergence of $X_i$ is essentially bounded.
\end{proof}

Denote the flows of $X_1,\ldots, X_k$ by $\phi_t^1, \ldots, \phi_t^k$,
respectively. By Corollary~\ref{cor:commute}, they commute.

Define a map $\Phi : \R^n \to \R^n$ by
\begin{displaymath}
  \Phi(x_1,\ldots, x_n) = (\phi^1_{x_1} \circ \cdots
  \phi^k_{x_k})(p_1,\ldots, p_k,p_{k+1}+x_{k+1}, \ldots, p_n + x_n).
\end{displaymath}
Then $\Phi$ is continuous and differentiable a.e.. We claim that there
exists $\veps > 0$  such that the restriction of $\Phi$ to the cube
$C_\veps = (-\veps,\veps)^n$ is injective. Observe that were $\Phi$ of class
$C^1$, this would follow immediately from the Inverse Function
Theorem.

Let $\veps > 0$ be small enough so that the closure of $C_\veps$ is
contained in $\Phi^{-1}(U)$. We claim that $\Phi$ is injective on the
closure of $C_\veps$. Assume that
\begin{displaymath}
  \Phi(x_1,\ldots,x_n) = \Phi(y_1,\ldots,y_n),
\end{displaymath}
i.e.,
\begin{displaymath}
  (\phi^1_{x_1} \circ \cdots
  \phi^k_{x_k})(p_1,\ldots,p_k,p_{k+1}+x_{k+1}, \ldots, p_n + x_n) = (\phi^1_{y_1} \circ \cdots
  \phi^k_{y_k})(p_1,\ldots,p_k,p_{k+1}+y_{k+1}, \ldots, p_n + y_n),
\end{displaymath}
for some $(x_1,\ldots,x_n), (y_1,\ldots,y_n) \in \overline{C}_\veps$. Denote the
flow of $\del/\del x_i$ by $\psi^i_t$. Since
$\pi \circ \phi_t^i = \psi_t^i \circ \pi$, by projecting
$\Phi(x_1,\ldots,x_n)$ and $\Phi(y_1,\ldots,y_n)$ via $\pi$, we obtain
\begin{align*}
  (\psi^1_{x_1} \circ \cdots
  \psi^k_{x_k})(p_1,\ldots,p_k) = (\psi^1_{y_1} \circ \cdots
  \psi^k_{y_k})(p_1,\ldots,p_k), 
\end{align*}
which implies that $x_i = y_i$, for $1 \leq i \leq k$.

Thus
\begin{displaymath}
  (\phi^1_{x_1} \circ \cdots
  \phi^k_{x_k})(p_1,\ldots,p_k,p_{k+1}+x_{k+1}, \ldots, p_n + x_n) = (\phi^1_{x_1} \circ \cdots
  \phi^k_{x_k})(p_1,\ldots,p_k,p_{k+1}+y_{k+1}, \ldots, p_n + y_n)
\end{displaymath}
which clearly implies $x_i = y_i$, for $k+1 \leq i \leq n$. Therefore,
$\Phi$ is 1--1 on $\overline{C}_\veps$. By the continuity of $\Phi$
and compactness of $\overline{C}_\veps$, it follows that $\Phi : C_\veps
\to \Phi(C_\veps)$ is a homeomorphism.

\medskip

Let $S_c$ be a slice $(-\veps,\veps)^k \times \{ c \} \subset C_\veps$
(where $c \in \R^{n-k}$) and let
\begin{displaymath}
  \alpha(t) = (x_1(t),\ldots, x_k(t),c)
\end{displaymath}
be an arbitrary $C^1$ path in $S_c$. Then
\begin{displaymath}
  \gamma(t) = \Phi(\alpha(t)) = \phi^1_{x_1(t)} \circ \cdots \circ \phi^k_{x_k(t)} (\text{const}).
\end{displaymath}
The chain rule and commutativity of the flows $\phi^i_t$ implies that
$\gamma'(t)$ is a linear combination of $X_1,\ldots, X_k$, hence
tangent to $E$. Therefore, $\Phi(S_c)$ is an integral
manifold of $E$.

Since the tangent bundle of each integral manifold $N$ of $E$ is a
continuous plane field (namely, $E$ restricted to $N$), it follows
that $N$ is a $C^1$ manifold. This completes the proof. \qed

\begin{remark}
  The following questions would be of interest for further exploration:
  
  \begin{enumerate}
  \item[(a)] Does the main result still hold if a $C^Q$ plane field is
    defined as locally spanned by $C^Q$ vector fields with compact
    support?

  \item[(b)] What can be said about foliations tangent to integrable
    $C^Q$ plane fields?
  \end{enumerate}
\end{remark}



\bibliographystyle{amsalpha} 

\begin{thebibliography}{Amb04}

\bibitem[Amb04]{ambrosio+04}
L.~Ambrosio, \emph{Transport equation and {C}auchy problem for {BV} vector
  fields}, Invent. Math. \textbf{158} (2004), no.~2, 227--260.

\bibitem[CT21]{colombo+tione+2021}
Maria Colombo and Riccardo Tione, \emph{On the commutativity of flows of rough
  vector fields}, Journal de Math\'{e}matiques Pures et Appliqu\'{e}es
  \textbf{1} (2021), no.~159.

\bibitem[dL89]{diPerna+Lions+89}
R.~J. diPerna and P.~L. Lions, \emph{Ordinary differential equations, transport
  theory and {S}obolev spaces}, Invent. Math. \textbf{98} (1989), no.~3,
  511--547.

\bibitem[Eva98]{evans+98}
Lawrence~C. Evans, \emph{Partial {D}ifferential {E}quations}, Graduate
  {S}tudies in {M}athematics, vol.~19, AMS, 1998.

\bibitem[Kos09]{koskela+09}
P.~Koskela, \emph{Lectures on quasiconformal and quasisymetric mappings},
  Jyv\"{a}skyl\"{a} Lectures in Mathematics (2009).

\bibitem[Lee13]{lee+smooth+13}
John~M. Lee, \emph{Introduction to {S}mooth {M}anifolds}, second ed., Grad.
  Text in Math., vol. 218, Springer, New York, 2013.

\bibitem[Ram07]{rampazzo+07}
F.~Rampazzo, \emph{Frobenius-type theorems for {L}ipschitz distributions},
  Journal of Differential Equations \textbf{243} (2007), no.~2, 270--300.

\bibitem[Rei76]{reimann+76}
H.~M. Reimann, \emph{Ordinary differential equations and quasiconformal
  mappings}, Invent. Math. \textbf{33} (1976), 247--270.

\bibitem[Sim96]{sns+96}
Slobodan~N. Simi\'c, \emph{Lipschitz distributions and {A}nosov flows}, Proc.
  Amer. Math. Soc. \textbf{124} (1996), no.~6, 1869--1877.

\end{thebibliography}





\end{document}